%
\input ./style/arxiv-vmsta.cfg
\documentclass[numbers,compress,v1.0.1]{vmsta}
\volume{3}
\issue{2}
\pubyear{2016}
\firstpage{181}
\lastpage{190}
\doi{10.15559/16-VMSTA59}

\setlength\printhistorymargin{10pt}

\startlocaldefs

\urlstyle{rm}
\allowdisplaybreaks

\newtheorem{corollary}{Corollary}
\newtheorem{proposition}{Proposition}
\newcommand{\E}{\mathbf{E}}
\newcommand{\Var}{\mathbf{Var}}
\newcommand{\Corr}{\mathbf{Corr}}
\newcommand{\Cov}{\mathbf{Cov}}
\newcommand{\R}{\mathbb{R}}
\newcommand{\pr}{\mathbf{P}}

\endlocaldefs

\begin{document}

\begin{frontmatter}

\title{Simulation paradoxes related to a fractional Brownian motion
with small Hurst index}

\author{\inits{V.}\fnm{Vitalii}\snm{Makogin}}\email{makoginv@ukr.net}
\address{Department of Probability Theory, Statistics and Actuarial Mathematics,
Taras Shevchenko National University of Kyiv,
64, Volodymyrska St., 01601 Kyiv, Ukraine}

\markboth{V. Makogin}{Simulation paradoxes related to a fractional
Brownian motion with small Hurst index}

\begin{abstract}
We consider the simulation of sample paths of a fractional Brownian
motion with small values of the Hurst index and estimate the behavior
of the expected maximum. We prove that, for each fixed $N$, the error
of approximation $\E\max_{t \in[0,1]}B^H(t) - \E\max_{i =
\overline{1,N}} B^H (i/N )$ grows rapidly to $\infty$ as the
Hurst index tends to 0.
\end{abstract}

\begin{keyword}
\xch{Fractional}{fractional} Brownian motion\sep
Monte Carlo simulations \sep
expected maximum\sep
discrete approximation
\MSC[2010]
65C50 \sep60G22
\end{keyword}
\received{31 May 2016}
\revised{19 June 2016}
\accepted{20 June 2016}
\publishedonline{4 July 2016}
\end{frontmatter}

\section{Introduction}\label{}

A fractional Brownian motion $\{B^H(t),t\geq0\}$ is a centered
Gaussian stochastic process with covariance function
\[
\E\bigl[B^H(t)B^H (u)\bigr] = \frac{1}2
\bigl(t^{2H} +u^{2H} -|t - u|^{2H}\bigr),\quad t,u\geq0,
\]
where $H\in(0,1)$ is the Hurst index. The fractional Brownian motion
is a~self-similar process with index $H$, that is, for any $a > 0$,
\[
\bigl\{B^H(t),t\geq0\bigr\}\stackrel{d} {=}\bigl
\{a^{-H}B^H(at), t \geq0\bigr\},
\]
where $\stackrel{d}{=}$ means the equality of finite-dimensional distributions.

Due to self-similarity, we have that, for all $T>0$,
\[
\bigl\{B^H(t),t\in[0,T]\bigr\}\stackrel{d} {=}\bigl
\{T^{H}B^H\bigl(T^{-1}t\bigr), t \in[0,T]\bigr\}=
\bigl\{ T^{H}B^H(s), s \in[0,1]\bigr\}.
\]
Based on such a invariance of distributions, it is appropriate to
investigate the properties of the fractional Brownian motion only over
the time interval $[0,1]$.

In this paper, we consider the behavior of the maximum functional\break
$\max_{t \in[0,1]}B^H(t)$ with small values of Hurst index.

It should be noted that the fractional Brownian motion process with
$H=1/2$ is the Wiener process $\{W(t),t\geq0\}$. The distribution of
$\max_{t \in[0,1]}W(t)$ is known. Namely,
\[
\pr \Bigl(\max_{t \in[0,1]}W(t)\leq x \Bigr)=\sqrt{\frac{2}{\pi}}
\int_{0}^x e^{-y^2/2}dy,\quad x\ge0,
\]
and, therefore,
\[
\E \Bigl[\max_{t \in[0,1]}W(t) \Bigr]=\sqrt{\frac{2}{\pi}}.
\]

Many papers are devoted to the distribution of the maximum functional
of the fractional Brownian motion, where usually asymptotic properties
for large values of time horizon $T$ are considered. For example,
Molchan \cite{Mol} has found an asymptotic behavior of small-ball
probabilities for the maximum of the fractional Brownian motion.
Talagrand \cite{Tal} obtained lower bounds for the expected maximum of
the fractional Brownian motion. In several works, the distribution of
the maximum is investigated when the Hurst index $H$ is close to $1/2$.
In particular, this case was considered by Sinai \cite{Sinai} and
recently by Delorme and Weise \cite{Del}.

Currently, an analytical expression for the distribution of the maximum
of the functional Brownian motion remains unknown. Moreover, the exact
value of the expectation of such a functional is unknown too.

From the paper of Borovkov et al.\ \cite{Borovkov} we know the
following bounds:
\begin{equation}
\label{maxbounds} \frac{1}{2\sqrt{H \pi e \ln2}} \leq\E\max_{t \in
[0,1]}B^H(t)
< \frac{16.3}{\sqrt{H}}.
\end{equation}
On the other hand, we may get an approximate value of the expected
maximum using Monte Carlo simulations. That is, for sufficiently large
$N $,
\begin{equation}
\label{appr} \E\max_{t \in[0,1]}B^H(t) \approx\E\max
_{i = \overline{1,N}} B^H(i/N).
\end{equation}
The authors of \cite{Borovkov} obtain an upper bound for the error
$\varDelta_N$ of approximation~\eqref{appr}. Namely, for $N\geq2^{1/H}$,
\begin{align}
\label{apprbounds1} 0\leq\varDelta_N:={}& \E\max_{t \in[0,1]}B^H(t)
- \E\max_{i = \overline{1,N}} B^H (i/N )
\\[4pt]
\label{apprbounds2}\leq{}&\frac{2 \sqrt{\ln N}}{N^H} \biggl(1+\frac{4}{N^H}+
\frac{0.0074}{(\ln N)^{3/2}} \biggr).
\end{align}

The implementation of approximation \eqref{appr} has technical
limitations. Due to modern computer capabilities, we assume that $N
\leq2^{20} \approx10^6$. Under such conditions, inequality \eqref
{apprbounds2} is true when $H \geq0.05$, and $\varDelta_N < 11.18$.

In this article, we make Monte Carlo simulations and estimate \linebreak
$\E\max_{i = \overline{1,N}} B^H(i/N)$. Also, we investigate the
behavior of $\varDelta_N$ with small values of the Hurst index $H$ and
show that, for a fixed $N$, the approximation error $\varDelta_N\to+\infty
$ as $H\to0$. For the rate of this convergence, when $N=2^{20}$, we
prove the inequality $\varDelta_N>c_1 H^{-1/2} - c_2,\ H\in(0,1)$, where
the constants $c_1=0.2055$ and $c_2=3.4452$ are calculated numerically.
Thus, when the values of $H$ are small, approximation \eqref{appr} is
not appropriate for evaluation of $\E\max_{t \in[0,1]}B^H(t)$.

The article is organized as follows. The first section presents the
methodology of computing. The second section presents the results of
computing of the expected maximum of the fractional Brownian motion.
In the third section, we obtain a lower bound for the error $ \varDelta_N$
and calculate the constants $c_1 $ and~$c_2 $.

\section{Methods of approximate calculations}
\subsection{Simulation of a vector $(B^H(i/N))_{1\leq i \leq N}$}

Let us consider briefly the method proposed by Wood and Chan \cite
{W-Ch}. Let $G$ be the autocovariance matrix of $(B^H(1/N),\ldots
,B^H(N/N))$. Embed $G$ in a~circulant $m\times m$ matrix $C$ given by
\begin{equation*}
C= %
\begin{pmatrix}
c_0 & c_1 & \cdots& c_{m-1}\\
c_{m-1} & c_0 & \cdots& c_{m-2}\\
\vdots& \vdots& \ddots& \vdots\\
c_1 & c_2 & \cdots& c_{0}
\end{pmatrix} %
,
\end{equation*}
where
\begin{equation*}
c_j= %
\begin{cases}
\frac{1}{N^{2H}} (|j-1|^{2H}-2 j^{2H}+(j+1)^{2H} ), & 0\le j\le\frac
{m}{2}, \\[3pt]
\frac{1}{N^{2H}} ((m-j-1)^{2H}-2(m-j)^{2H}+(m-j+1)^{2H} ), & \frac
{m}{2}< j\le m-1.
\end{cases} %
\end{equation*}
\begin{proposition}
Let $m=2^{1+\nu}$, where
$ 2^\nu$ is the minimum power of $2$ not less than~$N$. Then the
matrix $C$ allows a representation $ C = QJQ^T$, where $J$ is a
diagonal matrix of eigenvalues of the matrix $C$, and $Q$ is the
unitary matrix with elements
\begin{equation*}
(Q)_{j,k}=\frac{1}{\sqrt{m}}c_j\exp \biggl(-2 i \pi
\frac{jk}{m} \biggr),\quad j,k=\overline{0,m-1}.
\end{equation*}
The eigenvalues $\lambda_k$ of the matrix $C$ are equal to
\begin{equation*}
\lambda_k=\sum_{j=0}^{m-1}\exp
\biggl(2 i \pi\frac{jk}{m} \biggr),\quad k=\overline{0,m-1}.
\end{equation*}
\end{proposition}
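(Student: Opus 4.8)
The plan is to recognize that $C$ is a \emph{circulant} matrix and to invoke the classical fact that every circulant is diagonalized by the discrete Fourier basis. Let $P$ denote the cyclic shift matrix, $(P)_{a,b}=1$ exactly when $b\equiv a+1\pmod m$ and $0$ otherwise. The circulant structure of $C$ (each row a cyclic shift of the one above) is precisely the statement that $C=\sum_{j=0}^{m-1}c_j\,P^{\,j}$, so the whole problem reduces to diagonalizing the single permutation $P$, whose powers all share one eigenbasis.

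First I would diagonalize $P$ explicitly. Put $\omega=\exp(2i\pi/m)$ and, for each $k=\overline{0,m-1}$, consider the Fourier vector $v_k$ with entries $(v_k)_b=m^{-1/2}\omega^{bk}$. A one-line computation gives $(Pv_k)_b=(v_k)_{b+1}=\omega^{k}(v_k)_b$ (indices modulo $m$), so $v_k$ is an eigenvector of $P$ with eigenvalue $\omega^{k}$. Since $C$ is a polynomial in $P$, the same $v_k$ diagonalizes $C$, with eigenvalue
\[
\lambda_k=\sum_{j=0}^{m-1}c_j\,\omega^{jk}=\sum_{j=0}^{m-1}c_j\exp\!\biggl(2i\pi\frac{jk}{m}\biggr).
\]
This is the asserted eigenvalue formula, except that the coefficients $c_j$ must be kept inside the sum; the expression displayed in the statement evidently omits them (and, symmetrically, the stray factor $c_j$ in the stated entries of $Q$ should be dropped, since a genuine Fourier matrix has no such weight).

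To finish I would assemble an orthonormal eigenbasis. The orthogonality of characters, $\sum_{b=0}^{m-1}\omega^{b(k-l)}=m\,\delta_{kl}$, shows that the vectors $v_k$ are orthonormal, so the matrix $Q$ with $(Q)_{j,k}=m^{-1/2}\exp(-2i\pi jk/m)$ is unitary, $Q^{\ast}Q=I$. Its columns are the conjugates $\overline{v_k}$, which are again eigenvectors of $C$ with the same eigenvalues because $C$ is real and the $\lambda_k$ are real (the latter following from the symmetry $c_j=c_{m-j}$ built into the definition of the $c_j$). Stacking the relations $C\,\overline{v_k}=\lambda_k\,\overline{v_k}$ then yields the spectral decomposition $C=QJQ^{\ast}$ with $J=\mathrm{diag}(\lambda_0,\dots,\lambda_{m-1})$, which is the claimed factorization (written $Q^{T}$ in the statement, agreeing with $Q^{\ast}$ up to the usual real/complex convention).

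I do not anticipate a real obstacle: this is the textbook spectral theory of circulants, and once $P$ is diagonalized everything else is automatic. The only points requiring care are purely clerical — fixing the index and sign conventions in the exponentials, and restoring the coefficients $c_j$ to the eigenvalues while deleting them from the entries of $Q$.
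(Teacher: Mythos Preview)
The paper does not actually prove this proposition; it is stated without proof as a known ingredient of the Wood--Chan method and the text immediately proceeds to use the factorization. Your argument via the standard spectral theory of circulants --- writing $C=\sum_{j}c_j P^{j}$ for the cyclic shift $P$, diagonalizing $P$ by the discrete Fourier vectors, and reading off the eigenvalues of $C$ --- is correct and is exactly the classical derivation.

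You are also right to flag the clerical slips in the statement as printed: the factor $c_j$ should not appear in the entries of $Q$ (the diagonalizing matrix is the plain DFT matrix $(Q)_{j,k}=m^{-1/2}\exp(-2i\pi jk/m)$), and the eigenvalue formula should carry the coefficients, $\lambda_k=\sum_{j=0}^{m-1}c_j\exp(2i\pi jk/m)$. These are typos in the paper, not gaps in your reasoning. The one remaining nicety is the use of $Q^{T}$ rather than $Q^{\ast}$; as you note, since $C$ is real symmetric and $c_j=c_{m-j}$, the eigenvalues are real and the conjugate eigenvectors are again eigenvectors, so the distinction is harmless here.
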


Since $Q$ is unitary, we can set $Y=Q J^{1/2}Q^{T} Z$, where $Z\sim
N(0,I_m)$. Therefore, we get $Y\sim N(0,C)$. Thus, the distributions of
the vectors $(Y_0$, $Y_0+Y_1,\ldots,Y_0+\cdots+Y_{N-1})$ and
$(B^H(1/N),\ldots,B^H(N/N))$ coincide.

The method of Wood and Chan is exact and has complexity $ O (N \log
(N))$. A more detailed description of the algorithm, a comparison with
other methods of simulation of the fractional Brownian motion, and a
program code are contained in the paper \cite{Coe}. For reasons of
optimization of calculations, simulations in the present paper are made
by the method of Wood and Chan.

The estimate of the mean value $\E\max_{i = \overline{1,N}} B^H(i/N)$
is a sample mean over the sample of size $n$. That is why
the total complexity of the algorithm is $O(n N\log(N))$.

\subsection{Clark's method}

Instead of generating samples and computing sample means, there exists
a~method of Clark \cite{clark} for approximating the expected maximum.

Due to this method, the first four moments of the random variable $\max
\{\xi_1, \ldots,\allowbreak\xi_N\}$, where $(\xi_1,\ldots,\xi_N)$ is a
Gaussian vector, are calculated approximately. Since the fractional
Brownian motion is a Gaussian process, we put $(\xi _1,\ldots,\xi_N)
=\break(B^H(1/N),\ldots,B^H(N/N))$ and apply Clark's method for
approximate computing of $\E\max_{i=\overline{1,N}}B^H (i/N )$.

Let us illustrate the basic idea of Clark's method of calculating
\linebreak$\E\max\{\xi, \eta, \tau\}$, where $ \xi, \eta, \tau$ are
Gaussian distributed.

\begin{proposition}
Let $\xi,\eta,\tau$ be Gaussian random variables. Put $a=\Var(\xi)+\Var
(\eta)-\Cov(\xi,\eta)$ and let $a>0$.
Denote $\alpha:=(\E\xi-\E\eta)/a$. Then we have
\begin{align}
\E\max\{\xi,\eta\}&=\varPhi(\alpha) \E\xi+ \varPhi (-\alpha) \E
\eta+ a \varphi(\alpha);
\nonumber\\
\E \bigl(\max\{\xi,\eta\} \bigr)^2&=\varPhi(\alpha) \E
\xi^2 + \varPhi(-\alpha) \E\eta^2 + a \varphi(\alpha) ( \E
\xi+\E\eta),\label{Clark-exp}
\end{align}
where
$\varphi(x)=\frac{1}{\sqrt{2\pi}}\exp(-\frac{x^2}{2} )$ and $\varPhi
(x)=\int_{-\infty}^x \phi(t)dt$.
\end{proposition}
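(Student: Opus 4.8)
The plan is to reduce both identities to the partial moments $\E[\xi^k\mathbf{1}_{\{\nu>0\}}]$ and $\E[\eta^k\mathbf{1}_{\{\nu\le 0\}}]$ for $k=1,2$, where $\nu:=\xi-\eta$ (the auxiliary variable $\tau$ plays no role in these two identities). Since $\max\{\xi,\eta\}=\xi\mathbf{1}_{\{\xi>\eta\}}+\eta\mathbf{1}_{\{\xi\le\eta\}}$, and the same decomposition holds for its square, and $\{\xi>\eta\}=\{\nu>0\}$, both left-hand sides are expressed through these four quantities. The variable $\nu$ is Gaussian with $\E\nu=\E\xi-\E\eta$ and $\Var\nu=\Var\xi+\Var\eta-2\Cov(\xi,\eta)=:a^2$, so that $\alpha=\E\nu/a$ is its standardized mean and $\pr(\nu>0)=\varPhi(\alpha)$, $\pr(\nu\le0)=\varPhi(-\alpha)$.

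The computational engine is the bivariate-normal conditioning identity: for jointly Gaussian $(\xi,\nu)$ the conditional law of $\xi$ given $\nu=y$ is Gaussian with mean $m(y)=\E\xi+\frac{\Cov(\xi,\nu)}{\Var\nu}(y-\E\nu)$ and a variance $s^2$ independent of $y$. Writing $\E[\xi^k\mathbf{1}_{\{\nu>0\}}]=\int_0^\infty\E[\xi^k\mid\nu=y]f_\nu(y)\,dy$ with $\E[\xi\mid\nu=y]=m(y)$ and $\E[\xi^2\mid\nu=y]=m(y)^2+s^2$, the problem reduces to evaluating the truncated moments $\int_0^\infty(y-\E\nu)^j f_\nu(y)\,dy$ for $j=0,1,2$. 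These follow from $\int_\beta^\infty z\varphi(z)\,dz=\varphi(\beta)$ and the antiderivative $\int z^2\varphi(z)\,dz=\varPhi(z)-z\varphi(z)$, giving, for $j=1,2$, the values $a\varphi(\alpha)$ and $a^2(\varPhi(\alpha)-\alpha\varphi(\alpha))$ (the case $j=0$ being simply $\varPhi(\alpha)$). I would record the analogous formulas for $\eta$ by applying the same computation to $-\nu$, whose standardized mean is $-\alpha$.

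It then remains to assemble and simplify. Setting $c:=\Cov(\xi,\nu)=\Var\xi-\Cov(\xi,\eta)$ and $d:=\Cov(\eta,\nu)=\Cov(\xi,\eta)-\Var\eta$, the two defining algebraic identities are $c-d=a^2$ and $c+d=\Var\xi-\Var\eta$, whence $c^2-d^2=a^2(\Var\xi-\Var\eta)$. For the first moment the $\varphi(\alpha)$-coefficients from $\xi$ and from $\eta$ add to $(c-d)/a=a$, which yields the stated formula. For the second moment one first notes that $s^2+c^2/a^2=\Var\xi$, so the $\varPhi(\alpha)$-coefficient collapses to $\Var\xi+(\E\xi)^2=\E\xi^2$, and symmetrically for $\eta$; the surviving $\varphi(\alpha)$-terms (one linear, one quadratic in the conditional mean) then telescope, via $c^2-d^2=a^2(\Var\xi-\Var\eta)$ and $\E\nu=\E\xi-\E\eta$, to $a\varphi(\alpha)(\E\xi+\E\eta)$. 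I expect this last cancellation to be the \emph{main obstacle}: it is the only place where several non-obvious contributions must combine exactly, and carrying it out requires tracking the conditional-variance correction $s^2$ and the quadratic truncated moment through the bookkeeping rather than dropping them.
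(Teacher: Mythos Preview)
The paper does not actually prove this proposition; it is stated without proof and attributed to Clark \cite{clark} as part of the description of Clark's method. There is therefore nothing in the paper to compare your argument against directly.

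Your approach is the standard one and is correct: decompose $\max\{\xi,\eta\}$ according to the sign of $\nu=\xi-\eta$, condition on $\nu$ using the bivariate-normal regression $\E[\xi\mid\nu=y]=\E\xi+\tfrac{c}{a^2}(y-\E\nu)$ (and its square), and reduce to the truncated Gaussian moments $\int_0^\infty (y-\E\nu)^j f_\nu(y)\,dy$. The algebraic identities $c-d=a^2$, $s^2+c^2/a^2=\Var\xi$, and $c^2-d^2=a^2(\Var\xi-\Var\eta)$ are exactly what is needed to collapse the pieces, and your identification of the second-moment cancellation as the only nontrivial step is accurate.

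One point worth flagging: you silently (and correctly) take $a^2=\Var\xi+\Var\eta-2\Cov(\xi,\eta)$, i.e.\ $a=\sqrt{\Var(\xi-\eta)}$, which is Clark's original convention and the one required for the stated formulas to hold. The paper's displayed definition $a=\Var(\xi)+\Var(\eta)-\Cov(\xi,\eta)$ is evidently a misprint; with that $a$ neither identity is correct. Your proof implicitly repairs this, which is fine, but it would be worth saying so explicitly.
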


So, the exact value of $\E\max\{\xi,\eta\}$ is obtained from the
previous proposition.
\begin{proposition}
Let $\xi,\eta,\tau$ be Gaussian random variables. Let $\Corr(\tau,\xi)$
and\break $\Corr(\tau,\eta)$ be known. Then
\begin{equation*}
\Corr \bigl(\tau,\max\{\xi,\eta\} \bigr)=\frac{\sqrt{\Var(\xi)}\Corr(\tau
,\xi)\varPhi(\alpha)+\sqrt{\Var(\eta)}\Corr(\tau,\eta)\varPhi(-\alpha
)}{\sqrt{\E(\max\{\xi,\eta\})^2-(\E\max\{\xi,\eta\})^2}}.
\end{equation*}
\end{proposition}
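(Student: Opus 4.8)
The denominator on the right-hand side is precisely $\sqrt{\Var(\max\{\xi,\eta\})}$, since $\Var(M)=\E M^2-(\E M)^2$ for $M=\max\{\xi,\eta\}$. By definition of the correlation coefficient, $\Corr(\tau,M)=\Cov(\tau,M)/(\sqrt{\Var(\tau)}\sqrt{\Var(M)})$, and likewise $\Cov(\tau,\xi)=\sqrt{\Var(\tau)}\sqrt{\Var(\xi)}\,\Corr(\tau,\xi)$. Hence, after cancelling the common factor $\sqrt{\Var(\tau)}\sqrt{\Var(M)}$, the asserted formula is equivalent to the covariance identity
\begin{equation*}
\Cov\bigl(\tau,\max\{\xi,\eta\}\bigr)=\varPhi(\alpha)\Cov(\tau,\xi)+\varPhi(-\alpha)\Cov(\tau,\eta).
\end{equation*}
The whole task therefore reduces to establishing this identity for the jointly Gaussian triple $(\tau,\xi,\eta)$.

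To compute $\Cov(\tau,\max\{\xi,\eta\})$ the plan is to use Gaussian integration by parts (Stein's lemma for Gaussian vectors): if $(\tau,\xi,\eta)$ is jointly Gaussian and $g$ is Lipschitz, then $\Cov(\tau,g(\xi,\eta))=\Cov(\tau,\xi)\,\E[\partial_x g]+\Cov(\tau,\eta)\,\E[\partial_y g]$, the term carrying $\partial_\tau g$ dropping out because $g$ does not depend on $\tau$. Applying this with $g(x,y)=\max\{x,y\}$, whose almost-everywhere partial derivatives are $\partial_x g=\mathbf{1}_{\{x>y\}}$ and $\partial_y g=\mathbf{1}_{\{x<y\}}$, yields
\begin{equation*}
\Cov\bigl(\tau,\max\{\xi,\eta\}\bigr)=\Cov(\tau,\xi)\,\pr(\xi>\eta)+\Cov(\tau,\eta)\,\pr(\xi<\eta).
\end{equation*}

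Finally I would identify the two probabilities. The difference $\xi-\eta$ is Gaussian with mean $\E\xi-\E\eta$ and standard deviation $a$ (the same normalization that makes the formula of the previous proposition correct), so that $\pr(\xi>\eta)=\pr(\xi-\eta>0)=\varPhi((\E\xi-\E\eta)/a)=\varPhi(\alpha)$, and by symmetry $\pr(\xi<\eta)=\varPhi(-\alpha)$. Substituting these back gives the covariance identity above, and dividing through by $\sqrt{\Var(\tau)}\sqrt{\Var(M)}$ recovers exactly the stated expression for $\Corr(\tau,\max\{\xi,\eta\})$.

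The main obstacle is justifying the integration-by-parts step, since $\max\{x,y\}$ fails to be differentiable on the diagonal $\{x=y\}$. This is harmless: the max is globally Lipschitz, its weak gradient is given by the indicators above, and the event $\{\xi=\eta\}$ has probability zero whenever $\Var(\xi-\eta)>0$, so the diagonal contributes nothing. Rigorously one approximates $\max$ by smooth functions with uniformly bounded gradients converging almost everywhere, applies Stein's lemma to each, and passes to the limit by dominated convergence. Alternatively, one can bypass Stein's lemma entirely and evaluate $\E[\tau\,\xi\,\mathbf{1}_{\{\xi>\eta\}}]$ and $\E[\tau\,\eta\,\mathbf{1}_{\{\xi<\eta\}}]$ directly from the decomposition $\max\{\xi,\eta\}=\xi\,\mathbf{1}_{\{\xi>\eta\}}+\eta\,\mathbf{1}_{\{\xi<\eta\}}$ by means of the standard truncated-moment formulas for jointly Gaussian variables; this route is more computational but fully elementary.
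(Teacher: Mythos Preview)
The paper does not supply a proof of this proposition; it is quoted as part of the description of Clark's method, with Clark's original paper cited as the source. So there is no paper argument to compare against.

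Your proposal is correct and is in fact the cleanest way to establish the formula. The reduction to the covariance identity
\[
\Cov\bigl(\tau,\max\{\xi,\eta\}\bigr)=\varPhi(\alpha)\,\Cov(\tau,\xi)+\varPhi(-\alpha)\,\Cov(\tau,\eta)
\]
is exactly the right move, and Stein's lemma delivers it in one line once one observes that the weak gradient of $\max\{x,y\}$ is $(\mathbf{1}_{\{x>y\}},\mathbf{1}_{\{x<y\}})$. The identification $\pr(\xi>\eta)=\varPhi(\alpha)$ is then immediate. Your handling of the non-differentiability on the diagonal (Lipschitz, smooth approximation, null set) is standard and sufficient. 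The parenthetical remark that $a$ must be read as the standard deviation of $\xi-\eta$ for the formulas of the preceding proposition to hold is well taken: the paper's literal definition $a=\Var(\xi)+\Var(\eta)-\Cov(\xi,\eta)$ is a misprint of Clark's $a=\bigl(\Var(\xi)+\Var(\eta)-2\Cov(\xi,\eta)\bigr)^{1/2}$.
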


For approximate computing $\E\max\{\xi,\eta,\tau\}=\E\max\{\tau,\max\{
\xi,\eta\}\}$, we assume that $\max\{\xi,\eta\}$ has a Gaussian
distribution. In fact, this is not true, but it allows us to apply
formula \eqref{Clark-exp} for random variables $\tau$ and $\max\{\xi
,\eta\}$.
Thus, iteratively, we can calculate the approximate mean for any finite
number of Gaussian random variables.

\section{Computing the expected maximum}
In this section, we present results of approximate computing 
$\E\max_{i=\overline{1,N}}B^H (i/N )$
by generating random samples and applying Clark's method. Also, we
compare the computational results obtained by these two methods.

The values of the Hurst index are taken from the set $\{
10^{-4}(1+4i),\, i=\overline{0,24}\}\cup\{10^{-2}i,\,i=\overline{1,9}\}
$. The values of $N$ are chosen from the set $\{2^j,j=\overline{8,19}\}$.
The values of $(B^H(1/N),B^H(2/N)$, $\ldots, B^H(N/N))$ are simulated
by the method of Wood and Chan for each pair $N,H$ with the sample size
$n=1000$. For each element in the sample, we calculate the following
functionals:
\begin{align}
\label{maxf} &\max_{i=\overline{1,N}}B^H (i/N ),
\\
\label{intf} &\frac{1}{N}\sum_{i=1}^N
B^H (i/N ).
\end{align}

\subsection{Approximation error of $ \frac{1}{N}\sum_{i=0}^N B^H (i/N)$}

We compute the sample mean and variance of \eqref{intf}.
The values of theoretical moments of \eqref{intf} are known:
\[
\E\frac{1}{N}\sum_{i=0}^N
B^H(i/N)=0,
\]
\[
\quad\E \Biggl(\frac{1}{N}\sum_{i=0}^N
B^H(i/N) \Biggr)^2=\frac{1}{N^{2H+2}}\sum
_{i=1}^{N}{i^{2H+1}}\to\frac{1}{(2H+2)},\quad N
\to\infty.
\]

\begin{figure}
\includegraphics{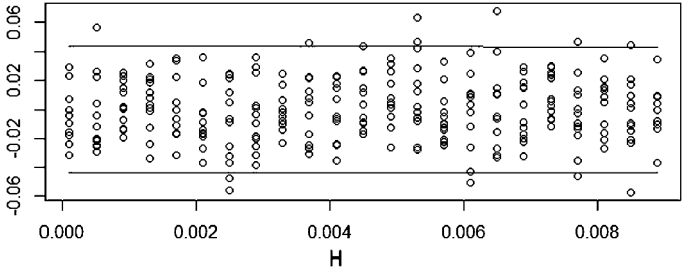}
\caption{Sample means of $\frac{1}{N}\sum_{i=0}^N B^H(i/N)$}\label{integrE}
\end{figure}
\begin{figure}
\includegraphics{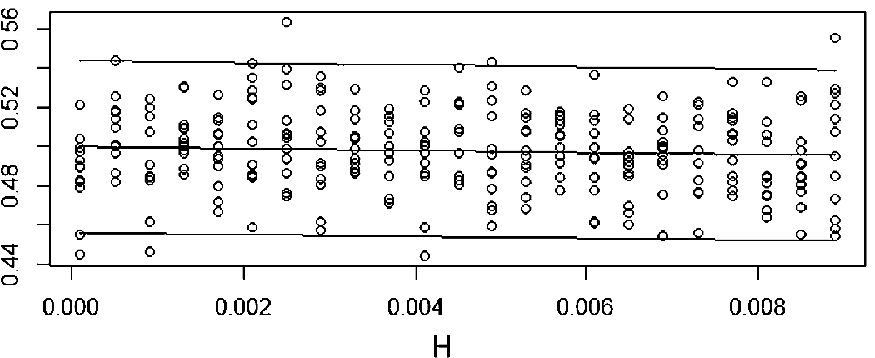}
\caption{Sample variances of $\frac{1}{N}\sum_{i=0}^N B^H(i/N)$}\label{integrV}
\end{figure}

The sample moments of \eqref{intf} when $H=\{10^{-4}(1+4i),\,i=\overline
{0,24}\}$ and $N=\{2^j,\,j=\overline{8,19}\}$ are presented in
Figs.~\ref{integrE} and \ref{integrV}.
In the figures, the lines indicate the theoretical moments and
confidence intervals corresponding to the reliability of 95\%.
The data confirm the correctness of calculations of \eqref{intf} with
the reliability of 95\% even for small values of $H$.

\subsection{Computing functional $\max_{i = \overline{1, N}} B^H (i/N) $}

For each pair $ N, H$, we obtain the sample of values of the maximum
functional~\eqref{maxf}
with sample size $1000$. For some values of $ H $, the sample means
and approximate values of the expected maximum, obtained by Clark's
method, are presented in Table~\ref{tableWC}.

\begin{table}
\caption{\xch{The approximate values of the expected maximum}{}} \label{tableWC}
\renewcommand{\arraystretch}{1.05}
\tabcolsep=4pt
\begin{tabular*}{\textwidth}{@{\extracolsep{\fill}}l@{}cccc@{}cccc@{}}
\hline
&\multicolumn{4}{c}{Sample means of \eqref{maxf}}  &  \multicolumn{4}{c}{Values due to Clark's method}\\
\cline{2-5}
\cline{6-9}
$N\diagdown H$ & 0.0900 & 0.0100 & 0.0013 & 0.0001 &  0.0900 & 0.0100 & 0.0013 & 0.0001 \\
\hline
$2^8$ & 1.7017 & 2.0019 & 1.9897 & 1.9769 &           1.1738 & 1.8691 & 1.9696 & 1.9839 \\
$2^9$ & 1.7693 & 2.0875 & 2.1602 & 2.1360 &           1.1903 & 1.9991 & 2.1194 & 2.1366 \\
$2^{10}$ & 1.9487 & 2.2504 & 2.3047 & 2.2854 &        1.1971 & 2.1193 & 2.2604 & 2.2806 \\
$2^{11}$ & 2.0138 & 2.4203 & 2.4446 & 2.4184 &        1.1966 & 2.2310 & 2.3939 & 2.4174 \\
$2^{12}$ & 2.0886 & 2.5086 & 2.5948 & 2.5334 &        1.1910 & 2.3351 & 2.5208 & 2.5476 \\
$2^{13}$ & 2.1938 & 2.6396 & 2.6934 & 2.6885 &        1.1822 & 2.4327 & 2.6420 & 2.6723 \\
$2^{14}$ & 2.2591 & 2.7612 & 2.7829 & 2.7940 &        1.1714 & 2.5242 & 2.7579 & 2.7919 \\
$2^{15}$ & 2.3327 & 2.8837 & 2.9452 & 2.9258 &        1.1586 & 2.6104 & 2.8693 & 2.9070 \\
$2^{16}$ & 2.4050 & 2.9973 & 3.0526 & 3.0464 &        1.1436 & 2.6917 & 2.9765 & 3.0181 \\
$2^{17}$ & 2.4620 & 3.0791 & 3.1386 & 3.1121 &        1.1263 & 2.7685 & 3.0798 & 3.1256 \\
$2^{18}$ & 2.5328 & 3.1900 & 3.2102 & 3.2421 &        1.1068 & 2.8412 & 3.1798 & 3.2297 \\
$2^{19}$ & 2.5597 & 3.3481 & 3.3487 & 3.3663 &        1.0855 & 2.9101 & 3.2766 & 3.3307 \\
\hline
\end{tabular*}
\end{table}

Within the data obtained by the different methods, we get that the
approximate values obtained by Clark's algorithm differ from the sample
means at most by 57\xch{.}{,}6\% when $H=0.09$, by 13\xch{.}{,}08\% when $H=0.01$, by
2\xch{.}{,}85\% when $H=0.0013$, and by 1.06\% when $H=0.0001$.
Thus, when $H\leq0.0013$,
the values of the expected maximum, obtained by these completely
different methods, are numerically identical. This indicates that the
sample mean is approximately equal to $\E\max_{i=\overline{1,N}}B^H(i/N)$.

\section{Bounds for the approximation error}
In this section, we find bounds for the error of approximation \eqref
{appr}. As noted before, $\E\max_{t \in[0,1]}B^H(t)$ $\ge(4H \pi e
\ln2)^{-1/2}$.
It is expected that obtained sample means of the maximum functional
\eqref{maxf} also satisfies this constraint.
In Fig.~\ref{boundsPIC}, the sample means and the values of $(4H \pi e
\ln2)^{-1/2}$ are presented.
\begin{figure}[t]
\includegraphics{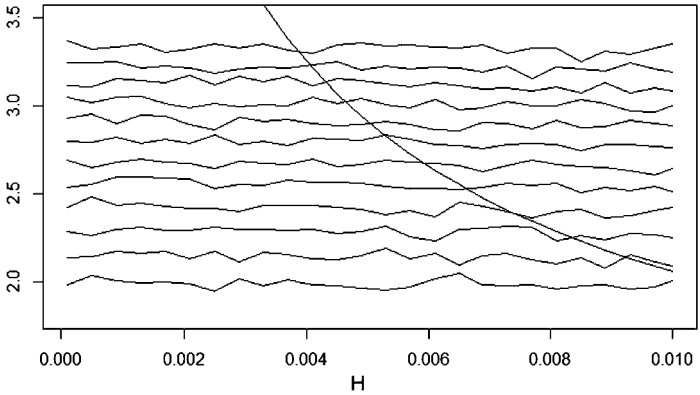}
\caption{Sample means of the maximal functional}
\label{boundsPIC}
\end{figure}
As one can see, the inequality $\E\max_{i =\overline{1,N}}B^H(i/N) \ge
(4H \pi e \ln2)^{-1/2}$ is false for small values of $ H$.

There are two possible explanations of this fact: either there is a
significant error in calculations, or the approximation error $\varDelta
_N$ grows rapidly as $ H \to0$. Let us verify these two explanations.

From \cite[Theorem 4.2]{Borovkov} we get that the expectation of the
maximal functional \eqref{maxf} grows as $H\to0$ and has the limit
\begin{equation}
\label{Eqlim} \lim_{H\to0}\E\max_{i=\overline{1,N}}B^{H}(i/N)=
\frac{1}{\sqrt{2}}\E \Bigl(\max_{i=\overline{1,N}}\xi_i
\Bigr)^+,
\end{equation}
where $\xi_1,\ldots,\xi_N$ are i.i.d. r.v.s, $\xi_1\sim N(0,1)$, and
$x^+:=\max\{0,x\}$.\vadjust{\eject}

Moreover, the rate of convergence in \eqref{Eqlim} is also obtained in
\cite{Borovkov}:
\begin{equation}
\label{Evel} 0 \leq\frac{1}{\sqrt{2}}\E \Bigl(\max_{i=\overline{1,N}}
\xi_i \Bigr)^+ - \E\max_{i=\overline{1,N}}B^{H}(i/N)
\leq1-\frac{1}{N^{2H}}.
\end{equation}
The right-hand side of \eqref{Evel} does not exceed 0.1 when $N=2^{20}$
and $H<0.0038$.

We apply two approaches to calculate $\frac{1}{\sqrt{2}}\E (\max_{i=\overline{1,N}}\xi_i )^+$. The first one is Monte Carlo simulations.

The sample means of $\frac{1}{\sqrt{2}} (\max_{i=\overline{1,N}}\xi_i
)^+$ are presented in Table~\ref{table4} for several sample sizes $n$.

\begin{table}
\caption{\xch{Values of limit \eqref{Eqlim}}{}}
\label{table4}
\tabcolsep=3pt
\renewcommand{\arraystretch}{1.05}
\begin{tabular*}{\textwidth}{@{\extracolsep{\fill}}lccccc@{}c@{}}
\hline
&
\multicolumn{5}{c}{\rule{0pt}{10pt}Sample means of $\frac{1}{\sqrt{2}} (\max_{i=\overline{1,N}}\xi_i )^+$}&
$N \int_{0.5}^{1}\mathrm{erf}^{(-1)}(2z-1) z^{N-1}d z$\\
\noalign{\vspace*{1pt}}\cline{2-6}
$N\diagdown n$ & 1000 & 5000 & 10000 & 15000 & 20000 &\\
\hline
$2^8$ & 1.9908 & 1.9908 & 1.9957 & 1.9965 & 1.9961 & 1.9989\\
$2^9$ & 2.1462 & 2.1506 & 2.1520 & 2.1526 & 2.1525 &2.1524\\
$2^{10}$ & 2.3071 & 2.3033 & 2.3006 & 2.3004 & 2.2994 &2.2969\\
$2^{11}$ & 2.4409 & 2.4362 & 2.4360 & 2.4371 & 2.4351 &2.4337\\
$2^{12}$ & 2.5712 & 2.5657 & 2.5648 & 2.5635 & 2.5643 &2.5640\\
$2^{13}$ & 2.6824 & 2.6847 & 2.6877 & 2.6874 & 2.6867 &2.6887\\
$2^{14}$ & 2.8150 & 2.8066 & 2.8065 & 2.8060 & 2.8078 &2.8082\\
$2^{15}$ & 2.9190 & 2.9259 & 2.9235 & 2.9248 & 2.9244 &2.9232\\
$2^{16}$ & 3.0301 & 3.0372 & 3.0353 & 3.0340 & 3.0348 &3.0343\\
$2^{17}$ & 3.1387 & 3.1372 & 3.1424 & 3.1418 & 3.1414 &3.1417\\
$2^{18}$ & 3.2394 & 3.2456 & 3.2469 & 3.2460 & 3.2461 &3.2458\\
$2^{19}$ & 3.3402 & 3.3442 & 3.3450 & 3.3458 & 3.3460 &3.3469\\
\hline
\end{tabular*}
\end{table}
As we see, with increasing sample size 20 times, the sample means
differ at most by 0.33\% for each $N$. Therefore, to ensure the
accuracy of calculations, it suffices to put $n = 1000$.
Under such conditions, technical resources allow us to calculate the
sample means for larger values of $ N$. In Table~\ref{table5}, the
values of the sample means are presented for $ N =\{2 ^ {20}, 2 ^ {21},
2 ^ {22}, 2 ^ {23}, 2 ^ {24}, 2 ^ {25} \} $ .\vadjust{\eject}

\begin{table}[h]
\caption{\xch{Values of limit \eqref{Eqlim} for $N\geq 2^{20}$}{}}
\label{table5}
\begin{tabular}{lcccccc}
\hline
$N$                                                                    & \rule{0pt}{10pt}$2^{20}$ & $2^{21}$ & $2^{22}$ & $2^{23}$ & $2^{24}$ & $2^{25}$ \\\hline
Sample means of $\frac{1}{\sqrt{2}} (\max_{i=\overline{1,N}}\xi_i )^+$ & \rule{0pt}{10pt} 3.4516  & 3.536    & 3.627    & 3.724    & 3.816    & 4.073    \\
$N \int_{0.5}^{1}\mathrm{erf}^{(-1)}(2z-1) z^{N-1}d z$                 & 3.4452                   & 3.541    & 3.634    & 3.726    & 3.815    & 3.902    \\\hline
\end{tabular}
\end{table}

Instead of generating random samples, we may calculate the value of
\linebreak $\frac{1}{\sqrt{2}}\E (\max_{i=\overline{1,N}}\xi_i )^+$
as an integral.
\begin{proposition}
Let $\xi_1,\ldots,\xi_N$ be i.i.d. r.v.s, $\xi_1\sim N(0,1)$. Then
\begin{align}
\nonumber
\frac{1}{\sqrt{2}}\E \Bigl(\max_{i=\overline{1,N}}
\xi_i \Bigr)^+&= \frac{N}{\sqrt{2}} \int_{1/2}^{1}
\varPhi^{(-1)}(z) z^{N-1}d z
\\
\label{intmax}&=N \int_{1/2}^{1}
\mathrm{erf}^{(-1)}(2z-1) z^{N-1}d z,
\end{align}
where $\varPhi^{(-1)}$ is the inverse function of $\varPhi(x)=\int_{-\infty}^{x}\frac{e^{-y^2/2}}{\sqrt{2\pi}}dy$, $x\in\R$, and $\mathrm
{erf}^{(-1)}$ is the inverse function of the error function $\mathrm{erf}$.
\end{proposition}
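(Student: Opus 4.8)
The plan is to read off the distribution of the maximum, express the expectation of its positive part as an ordinary integral, and reduce everything to a single substitution; I do not expect a genuine obstacle here. Set $M:=\max_{i=\overline{1,N}}\xi_i$. Since $\xi_1,\ldots,\xi_N$ are i.i.d.\ standard normal, the distribution function of $M$ is $\pr(M\le x)=\varPhi(x)^N$, so $M$ has density $f_M(x)=N\varPhi(x)^{N-1}\varphi(x)$, where $\varphi(x)=\frac{1}{\sqrt{2\pi}}e^{-x^2/2}$.

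First I would write, using $M^+=M\,\mathbf 1_{\{M>0\}}$,
\[
\frac{1}{\sqrt 2}\,\E M^+=\frac{1}{\sqrt 2}\int_0^\infty x\,f_M(x)\,dx=\frac{N}{\sqrt 2}\int_0^\infty x\,\varPhi(x)^{N-1}\varphi(x)\,dx .
\]
The key step is the substitution $z=\varPhi(x)$, for which $dz=\varphi(x)\,dx$ and $x=\varPhi^{(-1)}(z)$; the limit $x=0$ maps to $z=\varPhi(0)=1/2$ and $x\to\infty$ maps to $z\to 1$. This gives the first claimed identity
\[
\frac{1}{\sqrt 2}\,\E M^+=\frac{N}{\sqrt 2}\int_{1/2}^1 \varPhi^{(-1)}(z)\,z^{N-1}\,dz .
\]

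To pass to the second form, I would use the standard relation $\varPhi(x)=\tfrac12\bigl(1+\mathrm{erf}(x/\sqrt 2)\bigr)$. Writing $z=\varPhi(x)$ yields $2z-1=\mathrm{erf}(x/\sqrt 2)$, hence $\varPhi^{(-1)}(z)=\sqrt 2\,\mathrm{erf}^{(-1)}(2z-1)$. Substituting this into the last display cancels the factor $1/\sqrt 2$ and produces $N\int_{1/2}^1 \mathrm{erf}^{(-1)}(2z-1)\,z^{N-1}\,dz$, which is the required expression.

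The only point needing any care is justifying that $\E|M|<\infty$, so that the integral representation of $\E M^+$ and the substitution are legitimate; this follows at once from Gaussian tail estimates. Everything else is a direct computation, with the substitution $z=\varPhi(x)$ being the heart of the argument.
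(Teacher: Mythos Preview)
Your proof is correct and follows exactly the approach the paper indicates: the paper's own proof consists of the single line ``The proposition follows straightforwardly by quantile transformation,'' and your substitution $z=\varPhi(x)$ is precisely that transformation, with the passage to $\mathrm{erf}^{(-1)}$ being the routine rewriting. You have simply written out the details the paper omits.
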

\begin{proof}
The proposition follows straightforwardly by quantile transformation.
\end{proof}

We immediately get the following corollary.
\begin{corollary}For any $H\in(0,1)$ and $N\geq1$, we have
\begin{equation}
\label{ErB} \E\max_{i=\overline{1,N}}B^{H}(i/N)\leq N \int
_{1/2}^{1}\mathrm{erf}^{(-1)}(2z-1)
z^{N-1}d z.
\end{equation}
\end{corollary}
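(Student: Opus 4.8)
The plan is to combine the two preceding results. First I would invoke the rate-of-convergence bound \eqref{Evel}, which gives, for every $H\in(0,1)$ and $N\geq1$,
\[
\E\max_{i=\overline{1,N}}B^{H}(i/N)\leq\frac{1}{\sqrt{2}}\E \Bigl(\max_{i=\overline{1,N}}\xi_i \Bigr)^+.
\]
This is precisely the statement that the discrete expected maximum of the fractional Brownian motion never exceeds its small-$H$ limit from \eqref{Eqlim}; the left inequality in \eqref{Evel} asserts that the difference is nonnegative, which is the only piece I actually need here.

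Second I would rewrite the right-hand side using the exact integral representation from the preceding Proposition, namely \eqref{intmax}, which identifies
\[
\frac{1}{\sqrt{2}}\E \Bigl(\max_{i=\overline{1,N}}\xi_i \Bigr)^+ = N \int_{1/2}^{1}\mathrm{erf}^{(-1)}(2z-1) z^{N-1}d z.
\]
Chaining these two facts yields \eqref{ErB} immediately. I would present the argument in one or two lines: the inequality follows by first bounding above via the left estimate in \eqref{Evel} and then substituting the closed form \eqref{intmax}.

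There is essentially no obstacle here, since this is a corollary in the strict sense: the quantitative work resides entirely in \eqref{Evel} (imported from \cite{Borovkov}) and in the quantile-transformation identity \eqref{intmax}. The only point requiring a moment's care is making sure the uniformity in $H$ is genuine, i.e.\ that the bound \eqref{Evel} holds for all $H\in(0,1)$ rather than merely in the limit; but since its right-hand side $1-N^{-2H}$ is a valid finite-$H$ estimate and is nonnegative together with the left-hand difference, the inequality transfers cleanly to every $H$ without any additional argument.
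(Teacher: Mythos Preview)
Your proposal is correct and matches the paper's approach exactly: the paper derives the corollary immediately from the left inequality in \eqref{Evel} together with the integral identity \eqref{intmax}. Your added remark about uniformity in $H$ is accurate and does not alter the argument.
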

The integrand in \eqref{ErB} is not an elementary function, but its
values are tabulated, and there exist methods for its numerical
computing. For the present paper, the integral $N \int_{0.5}^{1}\mathrm
{erf}^{(-1)}(2z-1) z^{N-1}d z$ is calculated numerically, and the
corresponding values are presented in Tables~\ref{table4} and \ref{table5}.
By maintaining the accuracy of calculations, the maximum possible value
of $N$ is $2^{31}$, and the value of the integral reaches 4.390.

The values of $\frac{1}{\sqrt{2}}\E (\max_{i=\overline{1,N}}\xi_i
)^+$, obtained by the two methods, differ at most by 0.44 \% when
$N\leq2^{24}$. When $N=2^{20}$, the absolute error of numerical
computing of \eqref{intmax} is less than $1.3\times10^{-5}$.
Thereafter, for $N=2^{20}$, inequality \eqref{ErB} becomes
\begin{equation}
\label{ErightB} \E\max_{i=\overline{1,N}}B^H (i/N ) \leq 3.4452,
H\in(0,1).
\end{equation}

Let us return to the lower bound for $ \E\max_ {i = \overline{1, N}}
B^H (i / N )$. By Sudakov's inequality \cite{Borovkov,Sudakov} we have
\begin{equation}
\label{EleftB} \E\max_{i=\overline{1,N}}B^H (i/N )\ge\sqrt{
\frac{\ln(N+1)}{ N^{2H}2\pi\ln2}}.
\end{equation}

Moreover, the maximum of the right-hand side of \eqref{EleftB} equals
$(4H \pi e \ln2)^{- 1/2} $ and is reached when $ N = [e ^ {1 / {2H}} ] $.
The values of the lower bound are presented in Table~\ref{values}.

\begin{table}
\caption{Lower bounds}
\label{values}
\begin{tabular*}{\textwidth}{@{\extracolsep{\fill}}clcccccc@{}}
\hline
&& $H$ & 0.5000 & 0.0900 & 0.0100 & 0.0013 & 0.0001 \\
\hline
\rule{0pt}{9pt}$(2\sqrt{H \pi e \ln2})^{-1}$                &          && 0.5811 & 1.3696 & 4.1089               & 11.396                & 41.089                \\
$e^{1/2H}$                                   &          && 2.7183 & 258.67 & 5.18 $\times10^{21}$ & 1.1 $\times 10^{167}$ & $2.97\times10^{2171}$ \\\hline
                                             & $N$      &&        &        &                      &                       &                       \\\hline
                                             & $2^8$    && 0.0705 & 0.6853 & 1.0679               & 1.1207                & 1.1282                \\
                                             & $2^9$    && 0.0529 & 0.6828 & 1.1246               & 1.1873                & 1.1963                \\
                                             & $2^{10}$ && 0.0394 & 0.6761 & 1.1772               & 1.2503                & 1.2608                \\
                                             & $2^{11}$ && 0.0292 & 0.6662 & 1.2260               & 1.3101                & 1.3222                \\
                                             & $2^{12}$ && 0.0216 & 0.6537 & 1.2717               & 1.3671                & 1.3808                \\
$ (\frac{\ln(N+1)}{ N^{2H}2\pi\ln2} )^{1/2}$ & $2^{13}$ && 0.0159 & 0.6393 & 1.3145               & 1.4217                & 1.4371                \\
                                             & $2^{14}$ && 0.0117 & 0.6233 & 1.3547               & 1.4740                & 1.4913                \\
                                             & $2^{15}$ && 0.0085 & 0.6061 & 1.3925               & 1.5244                & 1.5435                \\
                                             & $2^{16}$ && 0.0062 & 0.5881 & 1.4283               & 1.5729                & 1.5940                \\
                                             & $2^{17}$ && 0.0045 & 0.5696 & 1.4620               & 1.6199                & 1.6429                \\
                                             & $2^{18}$ && 0.0033 & 0.5507 & 1.4940               & 1.6653                & 1.6905                \\
                                             & $2^{19}$ && 0.0024 & 0.5315 & 1.5244               & 1.7094                & 1.7367                \\
\hline
\end{tabular*}
\end{table}

Combining Tables~\ref{tableWC}, \ref{table4}, and \ref{values}, we get
that all obtained sample means for\break $\E\max_{i=\overline
{1,N}}B^{H}(i/N)$ satisfy the constraint
\[
\biggl(\frac{\ln(N+1)}{ N^{2H}2\pi\ln2} \biggr)^{1/2} \leq\E\max_{i=\overline
{1,N}}B^{H}(i/N)
\leq N \int_{1/2}^{1}\text{erf}^{(-1)}(2z-1)
z^{N-1}d z.
\]
Therefore, even with small values of the parameter $H$, the simulation
does not lead to contradiction.

Now let us find a lower bound for the approximation error $ \varDelta_N$.
We prove the following proposition.
\begin{proposition} Let $\varDelta_N$ be defined by \eqref{apprbounds1}.
Then, for any $H\in(0,1)$ and $N\geq1$, we have
\begin{equation}
\label{DeltLeft} \varDelta_N \geq\frac{1}{2\sqrt{H \pi e \ln2}} - N \int
_{1/2}^{1}\mathrm{erf}^{(-1)}(2z-1)
z^{N-1}d z.
\end{equation}
\end{proposition}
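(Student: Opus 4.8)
The plan is to write $\varDelta_N$ as the difference of two expectations and to bound each one separately, invoking results already recorded in the excerpt. By the definition in \eqref{apprbounds1},
\[
\varDelta_N = \E\max_{t \in[0,1]}B^H(t) - \E\max_{i = \overline{1,N}} B^H(i/N),
\]
so it is enough to supply a lower bound for the continuous maximum and an upper bound for the discrete one, and then subtract.

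For the first term I would use the left-hand inequality of \eqref{maxbounds}, due to \cite{Borovkov}, which asserts
\[
\E\max_{t \in[0,1]}B^H(t) \geq \frac{1}{2\sqrt{H \pi e \ln2}}
\]
for every $H\in(0,1)$. For the second term I would invoke the corollary containing \eqref{ErB}, which provides, for all $H\in(0,1)$ and $N\geq1$, the upper bound
\[
\E\max_{i=\overline{1,N}}B^{H}(i/N)\leq N \int_{1/2}^{1}\mathrm{erf}^{(-1)}(2z-1)\, z^{N-1}\,d z .
\]
Substituting the first inequality into the positive part of $\varDelta_N$ and the second into the subtracted part yields precisely \eqref{DeltLeft}.

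I do not anticipate a genuine obstacle: the statement is a direct combination of two inequalities that are already in force on the full parameter range $H\in(0,1)$, $N\geq1$, which is exactly the range claimed in the proposition. The only point worth a moment's attention is the matching of ranges — in particular that the Sudakov-type lower bound \eqref{maxbounds} and the quantile-transformation bound \eqref{ErB} are each stated without restriction on $H$ and $N$ beyond $H\in(0,1)$, $N\geq1$ — so no additional self-similarity reduction or supplementary estimate is required, and the proof reduces to subtracting the two displayed bounds.
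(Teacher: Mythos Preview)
Your proof is correct and coincides with the paper's own argument: the paper simply states that the inequality follows from \eqref{maxbounds} and \eqref{ErB}, which is precisely the combination of the lower bound on the continuous maximum and the upper bound on the discrete maximum that you invoke.
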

\begin{proof}
The statement follows from inequalities \eqref{maxbounds} and \eqref{ErB}.
\end{proof}
From this it follows that, for a fixed $N$, the approximation error
$\varDelta_N \to+ \infty$ as $ H \to0$. We also have the following
evident corollaries.

\begin{corollary}
Let $N=2^{20}$. Then
\begin{equation}
\label{upb} \varDelta_N \geq\frac{0.2055}{\sqrt{H }} - 3.4452,\quad H\in
(0,1).
\end{equation}
\end{corollary}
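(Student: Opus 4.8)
The plan is to specialize the preceding Proposition, namely the lower bound \eqref{DeltLeft}, to the case $N=2^{20}$ and then replace the two terms on its right-hand side by explicit numerical constants. First I would write out \eqref{DeltLeft} at $N=2^{20}$:
\[
\varDelta_N \geq \frac{1}{2\sqrt{H \pi e \ln2}} - 2^{20}\int_{1/2}^{1}\mathrm{erf}^{(-1)}(2z-1)\,z^{2^{20}-1}\,d z .
\]
The two terms are handled separately, and each reduces to a numerical evaluation that has already been prepared earlier in the paper.

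For the first term I would factor out the dependence on $H$ by writing $\frac{1}{2\sqrt{H \pi e \ln2}} = \frac{1}{2\sqrt{\pi e \ln 2}}\cdot H^{-1/2}$ and then compute the constant coefficient. Using $\pi e \ln 2 \approx 5.919$, so that $2\sqrt{\pi e \ln 2} \approx 4.866$, gives $\frac{1}{2\sqrt{\pi e \ln 2}} \approx 0.2055 = c_1$. This identifies the leading $H^{-1/2}$ term and pins down $c_1$.

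For the second term I would invoke \eqref{ErightB}, which was established for exactly $N=2^{20}$ via the numerical evaluation of the integral in \eqref{intmax} (with absolute error less than $1.3\times 10^{-5}$), yielding
\[
2^{20}\int_{1/2}^{1}\mathrm{erf}^{(-1)}(2z-1)\,z^{2^{20}-1}\,d z \leq 3.4452 = c_2 .
\]
Since this quantity does not depend on $H$, subtracting it from the first term produces the uniform bound $\varDelta_N \geq c_1 H^{-1/2} - c_2$ valid for all $H\in(0,1)$, which is \eqref{upb}.

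The argument is essentially a substitution followed by two numerical estimates, so there is no real analytic difficulty. The only delicate point—and the one I would flag as the main thing to get right—is the numerical value $c_2 = 3.4452$ for the integral term: the integrand involves the non-elementary function $\mathrm{erf}^{(-1)}$, so the bound relies on the controlled numerical integration already carried out and reported (Tables~\ref{table4}–\ref{table5}). I would simply cite \eqref{ErightB} for this and state that the constants $c_1=0.2055$ and $c_2=3.4452$ follow by the elementary computation above.
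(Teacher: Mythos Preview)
Your proposal is correct and follows essentially the same route as the paper: specialize \eqref{DeltLeft} to $N=2^{20}$, compute $1/(2\sqrt{\pi e\ln 2})\approx 0.2055$, and use the numerical value $3.4452$ of the integral recorded just before \eqref{ErightB}. The paper's one-line proof cites \eqref{EleftB} and \eqref{DeltLeft}; the reference to \eqref{EleftB} seems to point to the surrounding discussion where the constant $(4H\pi e\ln 2)^{-1/2}$ appears, so your citation of \eqref{ErightB} for the $3.4452$ is arguably cleaner but amounts to the same thing.
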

\begin{proof}
The statement follows from inequalities \eqref{EleftB} and \eqref{DeltLeft}.
\end{proof}

\begin{corollary}Let $N=2^{20}$. Then for the relative error, we have
\begin{equation}
\label{deltLeft} \delta_H:=\frac{\varDelta_N} { \E\max_{t \in
[0,1]}B^H(t)}\geq1- 16.765 \sqrt{H},
\quad H\in(0,1).
\end{equation}
\end{corollary}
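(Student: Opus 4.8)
The plan is to rewrite the relative error as a ratio of expected maxima and then plug in the two bounds already in hand. First I would substitute the definition of $\varDelta_N$ from \eqref{apprbounds1} and divide by $\E\max_{t\in[0,1]}B^H(t)$ to obtain
\begin{equation*}
\delta_H = 1 - \frac{\E\max_{i=\overline{1,N}}B^H(i/N)}{\E\max_{t\in[0,1]}B^H(t)}.
\end{equation*}
Because the grid $\{i/N\}$ is contained in $[0,1]$, both expectations are positive and this fraction is well defined, so it suffices to bound it from above by $16.765\sqrt{H}$.

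Next I would estimate the numerator and denominator separately. For the numerator I invoke the uniform upper bound \eqref{ErightB}, which gives $\E\max_{i=\overline{1,N}}B^H(i/N)\le 3.4452$ for $N=2^{20}$. For the denominator I use the lower bound in \eqref{maxbounds}, namely $\E\max_{t\in[0,1]}B^H(t)\ge (2\sqrt{H\pi e\ln2})^{-1}$. Combining the two,
\begin{equation*}
\frac{\E\max_{i=\overline{1,N}}B^H(i/N)}{\E\max_{t\in[0,1]}B^H(t)} \le 3.4452\cdot 2\sqrt{H\pi e\ln2} = \bigl(2\cdot 3.4452\cdot\sqrt{\pi e\ln2}\,\bigr)\sqrt{H}.
\end{equation*}

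It then remains only to check the numerical constant: a direct evaluation gives $2\cdot 3.4452\cdot\sqrt{\pi e\ln2}\approx 16.764<16.765$, whence the fraction is at most $16.765\sqrt{H}$ and therefore $\delta_H\ge 1-16.765\sqrt{H}$ for all $H\in(0,1)$, as claimed. I do not anticipate a genuine obstacle here, since the statement is a mechanical combination of the uniform upper bound \eqref{ErightB} on the discrete maximum with the universal lower bound \eqref{maxbounds} on the continuous maximum. The only point demanding care is that the rounding of the final constant $16.765$ must be carried out in the upward direction, so that the displayed inequality remains valid rather than being violated by an arithmetic truncation.
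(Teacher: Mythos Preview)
Your argument is correct and is exactly the route the paper takes: the paper's own proof consists of the single sentence ``The statement follows from inequalities \eqref{maxbounds} and \eqref{ErightB},'' and you have simply spelled out how those two bounds combine. Your arithmetic check of the constant $2\cdot 3.4452\cdot\sqrt{\pi e\ln 2}\approx 16.764$ is the only detail the paper leaves implicit.
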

\begin{proof}
The statement follows from inequalities \eqref{maxbounds} and \eqref{ErightB}.
\end{proof}\goodbreak

When $N=2^{20}$, from inequalities \eqref{upb} and \eqref{deltLeft} we
get the following conclusions:
\begin{itemize}
\item if $H<0.00022$, then the relative error $\delta_H\geq75\%$, and
$\varDelta_N>10.34$;
\item if $H<0.00089$, then the relative error $\delta_H\geq50\%$, and
$\varDelta_N>3.45$;
\item if $H<0.0020$, then the relative error $\delta_H\geq25\%$, and
$\varDelta_N>1.15$;
\item if $H<0.0028$, then the relative error $\delta_H\geq10\%$, and
$\varDelta_N>0.38$;
\item if $H<0.0032$, then the relative error $\delta_H\geq5\%$, and
$\varDelta_N>0.18$;
\item if $H<0.0035$, then the relative error $\delta_H\geq1\%$, and
$\varDelta_N>0.03$.
\end{itemize}

Thus, we conclude that the estimation of $ \E\max_{t \in[0,1]} B^H
(t) $ by Monte Carlo simulations leads to significant errors for small
values of the parameter $H$.

\section*{Acknowledgments}
The author is grateful to prof. Yu. Mishura for numerous interesting
discussions and active support.

\end{document}